\documentclass[11pt,reqno]{amsart}
\usepackage{tikz}
\usepackage[a4paper,margin=1.03in]{geometry}
\usepackage{amsmath,amssymb,amsthm,mathtools}
\usepackage{enumitem}
\usepackage{booktabs}
\usepackage{microtype}
\usepackage{hyperref}
\usepackage[nameinlink,capitalize]{cleveref}
\usepackage{setspace}
\numberwithin{equation}{section}
\hypersetup{
  colorlinks=true,
  linkcolor=blue,
  citecolor=blue,
  urlcolor=blue,
  pdftitle={Self-convolution can destroy local univalence for bounded convex harmonic mappings},
  pdfauthor={Deguang Zhong}
}

\usepackage{xcolor}

\newtheorem{theorem}{Theorem}[section]
\newtheorem{proposition}[theorem]{Proposition}
\newtheorem{lemma}[theorem]{Lemma}
\newtheorem{corollary}[theorem]{Corollary}

\theoremstyle{definition}

\newtheorem{problem}[theorem]{Problem}
\theoremstyle{remark}
\newtheorem{remark}[theorem]{Remark}

\newcommand{\D}{\mathbb D}
\newcommand{\T}{\mathbb T}
\newcommand{\C}{\mathbb C}
\newcommand{\R}{\mathbb R}
\newcommand{\KH}{\mathcal K_H}
\newcommand{\KHZ}{\mathcal K_H^0}
\newcommand{\SH}{\mathcal S_H}
\newcommand{\SHZ}{\mathcal S_H^0}
\newcommand{\convh}{\mathbin{\widetilde *}}
\newcommand{\Jac}{J}
\newcommand{\Poisson}{\mathcal P}

\newcommand{\e}{\mathrm e}

\title[A counterexample to an open problem of Dorff]{A counterexample to an open problem of Dorff}

\author[Zhi-Gang Wang and Deguang Zhong
]{Zhi-Gang Wang$^*$ and Deguang Zhong  
}

\address{\noindent Zhi-Gang Wang  \vskip.05in
School of Mathematics and Statistics, Hunan First Normal University, Changsha 410205, Hunan, P. R.
China.}
\email{\textcolor[rgb]{0.00,0.00,0.84}{sjyzhigangwang$@$hnfnu.edu.cn}}

\address{\noindent Deguang Zhong\vskip.05in
Institute of Applied Mathematics, Shenzhen Polytechnic University,
Shenzhen 518055, Guangdong,   P. R.
China.}
\email{\textcolor[rgb]{0.00,0.00,0.84}{zhongdg1014$@$szpu.edu.cn}}

\begin{document}

\begin{abstract}
The classical Pólya-Schoenberg conjecture, proved by Ruscheweyh-Sheil-Small, asserts that the convolution of two normalized convex univalent functions is again convex. This property fails to carry over to planar harmonic mappings. In 2001, Dorff posed the open problem whether the self-convolution of a normalized convex harmonic mapping with bounded image must remain in the same class. We construct a normalized sense-preserving harmonic diffeomorphism that maps the unit disk onto an ellipse; its self‑convolution has vanishing Jacobian at some interior point of the unit disk, which provides a negative answer to Dorff’s open problem.
\end{abstract}

\subjclass[2020]{Primary 30C55, 30C45; Secondary 30C20, 31A05}
\keywords{Harmonic mapping, convolution, harmonic diffeomorphism}
\thanks{$^*$Corresponding author.}
\maketitle

\section{Introduction and statement of the main result}

For two normalized analytic functions on the unit disk \(\mathbb D\),
\[
  \varphi(z)=z+\sum_{n=2}^{\infty}c_nz^n,
  \quad
  \psi(z)=z+\sum_{n=2}^{\infty}d_nz^n,
\] 
their Hadamard product (or convolution) is defined by
\[
  (\varphi*\psi)(z)=z+\sum_{n=2}^{\infty}c_nd_nz^n.
\]
A classical result of Ruscheweyh-Sheil-Small \cite{RuscheweyhSheilSmall1973}, settling the P\'olya-Schoenberg conjecture, states in particular that the class of normalized analytic convex functions is closed under convolution.  This closure phenomenon is a central structural feature of the analytic function theory.
However, the harmonic situation is subtler. A complex-valued harmonic mapping of \(\mathbb D\) can be written as
\(
 f=h+\overline g,
\)
where $h$ and $g$ are analytic functions.  

If
\[
 f=h+\overline g,
 \quad
 F=H+\overline G,
\]
then their harmonic convolution is defined by
\[
  f\convh F=h*H+\overline{g*G}.
\]
The systematic study of harmonic univalent mappings began with Clunie-Sheil-Small \cite{ClunieSheilSmall1984}; see also the monograph of Duren \cite{Duren2004}.  In contrast with the analytic case, harmonic convolution need not preserve univalence, local univalence, or convexity.  In 2001, Dorff \cite{Dorff2001} developed a number of positive convolution theorems for right half-plane and strip mappings, under local-univalence hypotheses, and isolated the following natural bounded-domain problem.

\begin{problem}
\cite[Question 10]{Dorff2001}\label{q:Dorff}
{\it If $f\in\KH$ maps $\D$ onto a bounded domain, is
\(
  f\convh f\in\KH?
\)}
\end{problem}

In Dorff's notation, $\KH$ denotes the normalized sense-preserving univalent harmonic mappings of $\D$ onto convex domains, and let \(\KH^0\subset \KH\) be the subclass with the additional normalization \(g'(0)=0\).  His problem arose because the preceding results in \cite{Dorff2001} concern unbounded domains, whereas a particular bounded polygonal example has a well-behaved self-convolution.

Subsequent work has greatly refined the local-univalence and directional-convexity theory of harmonic convolutions, especially for half-plane and strip mappings; see, for example, Dorff-Nowak-Wo\l oszkiewicz \cite{DorffNowakWoloszkiewicz2012}, Li-Ponnusamy \cite{LiPonnusamy2013,LiPonnusamy2019,LiPonnusamy2022}, Jiang-Rasila-Sun \cite{SunJiangRasila2015}, and Ahmad El-Faqeer-Ng-Supramaniam \cite{ElFaqeerEtAl2021}.  

Our main result is as follows.

\begin{theorem}\label{thm:main}
There exists $f\in\KHZ$ such that $f(\D)$ is an ellipse and
\(
 f\convh f
\)
is not locally univalent in $\D$. 
\end{theorem}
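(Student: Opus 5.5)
The plan is to build $f$ as the Poisson extension of a carefully chosen homeomorphism of $\T$ onto an ellipse, and then to exhibit explicitly a point where the Jacobian of $f\convh f$ vanishes.

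\textbf{Step 1: the boundary map.} Fix semi-axes $a>b>0$ and write the boundary map as
\[
F(e^{it})=a\cos\phi(t)+ib\sin\phi(t)=A\,\e^{i\phi(t)}+B\,\e^{-i\phi(t)},\qquad A=\tfrac{a+b}{2},\ B=\tfrac{a-b}{2},
\]
where $\phi$ is an increasing reparametrization with $\phi(t+2\pi)=\phi(t)+2\pi$. By the Radó--Kneser--Choquet theorem, the Poisson extension $f=\Poisson[F]$ is then a univalent harmonic map of $\D$ onto the interior of the ellipse, which is convex. It is sense-preserving because $F$ runs counterclockwise. Writing $f=h+\overline g$, the coefficients of $h$ and $g$ are the positive and negative Fourier coefficients of $F$:
\[
h(z)=\sum_{n\ge1}\widehat F(n)z^n,\qquad \overline{g(z)}=\sum_{n\ge1}\widehat F(-n)\bar z^n,
\]
with $\widehat F(0)=0$ arranged by symmetry.

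The normalization $f\in\KHZ$ then amounts to two conditions: $\widehat F(-1)=0$, and $\widehat F(1)=1$ after a real rescaling of the ellipse.

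\textbf{Step 2: choosing $\phi$.} An affine map composed with a disk automorphism does not work. The constraint $h(0)=0$ then forces the automorphism to be a rotation, and $g'(0)=0$ then forces $B=0$, i.e.\ a disk. So $\phi$ must genuinely contain negative frequencies. I will take a one-parameter family such as
\[
\phi(t)=t+\varepsilon\sin 2t,\qquad |\varepsilon|<\tfrac12,
\]
so that $\phi$ stays increasing and, by oddness, $\widehat F(n)$ is real and vanishes for even $n$. The coefficients are expressible through Bessel functions $J_k(\varepsilon)$ via the Jacobi--Anger expansion of $\e^{\pm i\varepsilon\sin2t}$.

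The condition $\widehat F(-1)=0$ becomes one real equation in $(\varepsilon,a,b)$, of the form
\[
A\,\widehat{\e^{i\phi}}(-1)+B\,\widehat{\e^{-i\phi}}(-1)=0 .
\]
For fixed eccentricity I will solve it for $\varepsilon$ by the intermediate value theorem. A cleaner alternative is to allow two frequencies, $\phi=t+\varepsilon_1\sin2t+\varepsilon_2\sin4t$, and use the extra freedom. Finally, rescaling the ellipse gives $h'(0)=1$.

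\textbf{Step 3: the convolution.} The self-convolution has coefficients $\widehat F(n)^2$ and $\widehat F(-n)^2$, so
\[
f\convh f=H+\overline{G},\qquad H=\sum_{n\ge1}\widehat F(n)^2z^n,\quad G=\sum_{n\ge2}\widehat F(-n)^2z^n .
\]
Its Jacobian is $|H'|^2-|G'|^2$, and $H'(0)=1$ while $G'(0)=0$, so the Jacobian is positive near $0$. The goal is a point $z_0\in\D$ where $|G'(z_0)|\ge|H'(z_0)|$; by continuity the Jacobian then vanishes somewhere on the segment $[0,z_0]$, and $f\convh f$ is not locally univalent there.

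The heuristic for why such a point exists is that squaring coefficients damps the dominant analytic part faster than the co-analytic part near the boundary when the ellipse is very eccentric, i.e.\ $b/a$ small. I would search along a symmetry axis, $z=r$ or $z=ir$, where the computation reduces to real power series.

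\textbf{Main obstacle.} The hard part is making Step 3 rigorous. It requires two-sided estimates of the tails $\sum_{n>N}|\widehat F(\pm n)|^2 n r^{n-1}$ and tight control of the first several coefficients, which are given by Bessel-function values. I expect to handle this with a finite explicit computation of the leading coefficients, bounded with interval-type error estimates, together with crude tail bounds. These tail bounds come from the smoothness of $F$: integration by parts gives $|\widehat F(n)|\le C_k n^{-k}$ with explicit $C_k$.

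If the Bessel route is too messy, the fallback is to replace $\e^{i\phi}$ by a finite trigonometric choice for which the coefficients are exact rationals. Then $H$ and $G$ are polynomials, and a zero of $|H'|^2-|G'|^2$ can be certified symbolically.
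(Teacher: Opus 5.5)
\textbf{Genuine gap: the decisive step is missing, and your chosen family provably fails.} Your outline matches the paper's skeleton: a Poisson extension of a circle homeomorphism via Theorem \ref{thm:RKC}, a real-linear map onto an ellipse, $\Jac_F(0)=1$, a point $z_0$ with $|q'(z_0)|\ge|p'(z_0)|$, then the intermediate value theorem and Theorem \ref{thm:Lewy}. But the whole content of the theorem is the existence of such a $z_0$ for a concrete $f$, and you leave that as a heuristic. Worse, the family you commit to cannot supply it.

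For $\phi(t)=t+\varepsilon\sin 2t$, Jacobi--Anger gives $\widehat F(2k+1)=AJ_k(\varepsilon)+(-1)^{k+1}BJ_{k+1}(\varepsilon)$ and $\widehat F(-2m-1)=BJ_m(\varepsilon)+(-1)^{m+1}AJ_{m+1}(\varepsilon)$. So $\widehat F(-1)=0$ forces $B/A=J_1(\varepsilon)/J_0(\varepsilon)\le 0.26$ for $|\varepsilon|\le\tfrac12$. This has two consequences. First, $\varepsilon$ determines the shape, so you cannot prescribe the eccentricity and then solve for $\varepsilon$. Second, the very eccentric regime your heuristic relies on ($b/a<0.59$) is out of reach.

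After normalization, even at $|\varepsilon|=\tfrac12$ one finds $|M_3|\approx0.286$, $|M_5|\approx0.034$, $|N_3|\approx0.106$, $|N_5|\approx0.006$, and the remaining coefficients are negligible. Hence $\sum_{n\ge2}nM_n^2\approx0.25$ and $\sum_{n\ge2}nN_n^2\approx0.034$, so $|p'|>0.74>0.04>|q'|$ throughout $\D$. Thus $f\convh f$ is sense-preserving for every admissible $\varepsilon$.

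Adding $\varepsilon_2\sin 4t$ is still a mild perturbation under the constraint $\phi'>0$, and nothing in your plan explains why it would behave differently. The fallback is impossible outright: a trigonometric polynomial of modulus one on $\T$ is a monomial $cz^k$. So the only finite trigonometric degree-one reparametrizations of the circle are rotations, which you already excluded.

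\textbf{The missing idea.} You need a boundary correspondence that is far from uniform. Then the normalized analytic part has a long, slowly decaying coefficient tail, and near some boundary point the tail of $p'(z)=1+\sum_{n\ge2}nM_n^2z^{n-1}$ nearly cancels the constant term while $q'$ does not. The correspondence should also be chosen so that its Fourier coefficients are in closed form, making the sign check a finite exact computation.

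The paper takes $\Gamma(z)=zB_{-10/11}(z)/B_{1/44}(z)$. Its angular derivative ranges from about $10^{-3}$ to about $21$ but stays positive, which is checked by a convex-quadratic endpoint test. The quotient by a Blaschke factor supplies the negative frequencies. Partial fractions give exactly geometric coefficients $c_n=c_2(-10/11)^{n-2}$ and $c_{-n}=c_{-1}44^{1-n}$. After the affine normalization, $M_n$ and $N_n$ are combinations of $(-10/11)^{n-2}$ and $44^{2-n}$, so $p'$ and $q'$ are the explicit rational functions of Theorem \ref{thm:rational-derivatives}. One rational evaluation at $z_0=-99/100$ then gives $p'(z_0)\approx0.0096$ and $q'(z_0)\approx-0.0211$, with no Bessel values, tail bounds or interval arithmetic.

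Note also that the resulting ellipse is only mildly eccentric ($R_y/R_x\approx1.32$), and $f(0)=0$ lies about $0.51$ from its boundary. The mechanism is the long tail concentrated near a boundary point, not eccentricity.
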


\begin{remark}\label{rem:strong-failure}
{\rm Theorem \ref{thm:main} provides a negative answer to Problem \ref{q:Dorff}.}
\end{remark}


This paper is organized as follows.
In Section \ref{sec:prelim}, we fix notation and recall the harmonic Jacobian criterion, harmonic convolution, and the Rad\'o-Kneser-Choquet theorem.  Section \ref{sec:boundary} proves that the rational boundary map is a circle homeomorphism.  Section \ref{sec:fourier} computes its Fourier coefficients exactly. Section \ref{sec:normalization} performs the affine normalization and identifies the image ellipse. 
Section \ref{sec:coefficients} processes the coefficient structure after normalization. 
Section \ref{sec:convolution} derives closed forms for the self-convolution derivatives.  Section \ref{sec:sign} gives the decisive Jacobian sign change and proves Theorem \ref{thm:main}. 

\vskip .10in
\section{Preliminaries}\label{sec:prelim}
To prove our main result, we require the following preliminaries.
\subsection{Harmonic mappings and normalization}
Let $f:\D\to\C$ be harmonic.  Since $\D$ is simply connected, there exist analytic functions $h$ and $g$ on $\D$ such that
\begin{equation}\label{eq:harmonic-decomp}
 f=h+\overline g.
\end{equation}
The decomposition is unique after fixing $g(0)=0$.  In complex notation,
\[
 f_z=h',\quad f_{\bar z}=\overline{g'},
\]
and the Jacobian is
\begin{equation}\label{eq:jacobian}
 \Jac_f=|h'|^2-|g'|^2.
\end{equation}
Thus $f$ is sense-preserving wherever $|h'|>|g'|$.  The quotient
\[
 \omega=\frac{g'}{h'}
\]
is the second complex dilatation of $f$ whenever $h'\neq0$.

Following the standard notation originating in \cite{ClunieSheilSmall1984}, let $\SH$ be the family of normalized sense-preserving univalent harmonic mappings $f=h+\overline g$ of $\D$ satisfying
\(
 f(0)=0\) and \(h'(0)=1.
\)
Let $\SHZ$ denote the subclass for which additionally $g'(0)=0$.  

We recall the following classical result due to Lewy \cite{Lewy1936}.

\begin{theorem}\label{thm:Lewy}
If a planar harmonic mapping is locally one-to-one at a point, then its Jacobian does not vanish at that point.
\end{theorem}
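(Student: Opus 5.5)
The approach is to reduce the two-dimensional statement to a one-dimensional topological obstruction, by projecting $f$ onto a suitable real direction. Suppose $f=h+\overline g$ is harmonic and locally one-to-one near $z_0$, but $\Jac_f(z_0)=0$. After translating we may assume $z_0=0$ and $f(0)=0$. By \eqref{eq:jacobian}, $|h'(0)|=|g'(0)|$.

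\textbf{Step 1 (an analytic function with a critical point).} For real $\theta$ consider
\[
u_\theta=\operatorname{Re}\bigl(e^{i\theta}f\bigr).
\]
Since $\operatorname{Re}(e^{i\theta}\overline g)=\operatorname{Re}(e^{-i\theta}g)$, we have $u_\theta=\operatorname{Re}\varphi_\theta$, where $\varphi_\theta=e^{i\theta}h+e^{-i\theta}g$ is analytic. Its derivative at the origin is
\[
\varphi_\theta'(0)=e^{i\theta}h'(0)+e^{-i\theta}g'(0).
\]
Because $|h'(0)|=|g'(0)|$, we can choose $\theta$ with $e^{2i\theta}h'(0)=-g'(0)$; any $\theta$ works if both derivatives vanish. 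Fix this $\theta$, write $\varphi=\varphi_\theta$, and note that $\varphi'(0)=0$.

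\textbf{Step 2 (case analysis).} If $\varphi$ is constant, then $f$ maps a neighbourhood of $0$ into the line $L=\{w:\operatorname{Re}(e^{i\theta}w)=0\}$. A continuous injective map from an open planar set into a line is impossible, by invariance of domain or by a simple connectedness argument. Otherwise
\[
\varphi(z)-\varphi(0)=cz^k+O(z^{k+1})\quad\text{with } c\neq0,\ k\ge2.
\]
Writing $\varphi-\varphi(0)=\psi^k$ with $\psi$ a local conformal coordinate, $\psi(z)=c^{1/k}z+\dots$, the zero set of $u=\operatorname{Re}\varphi$ near $0$ is the preimage under $\psi$ of $2k$ rays. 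So, in a small disk $U$, it consists of $2k\ge4$ simple arcs emanating from $0$. These arcs are pairwise disjoint except at $0$.

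\textbf{Step 3 (the topological contradiction).} The map $f$ sends each of these $2k$ arcs into $L$, and each image arc starts at $f(0)=0$. By injectivity on $U$, the image arcs are nondegenerate and pairwise disjoint apart from the common endpoint $0$. Each image arc is connected, lies in $L$, and contains points other than $0$. Hence it meets one of the two components of $L\setminus\{0\}$ near $0$, and in fact contains an initial segment $(0,\varepsilon)$ or $(-\varepsilon,0)$ of that component. With at least four such arcs, two of them begin in the same half-line, and then they overlap near $0$, contradicting injectivity. Therefore $\Jac_f(z_0)\neq0$.

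The main point needing care is Step 2: the local structure of the level set of $\operatorname{Re}\varphi$ at a critical point. This is handled by the $k$-th root normal form $\varphi-\varphi(0)=\psi^k$, which makes the $2k$ arcs explicit. The rest is elementary topology.
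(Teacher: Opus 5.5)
The paper does not prove this statement. It quotes it as Lewy's classical theorem and uses it only once, in the final step, to pass from $\Jac_F(\xi)=0$ to the failure of local univalence of $F=f\convh f$ at $\xi$.

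Your argument is a correct and complete proof. To my recollection it is essentially the standard proof of Lewy's theorem, the one given for instance in Duren's monograph, which the paper cites for background. The steps are:
\begin{itemize}
\item The vanishing of $\Jac_f(z_0)$ gives a direction $\theta$ for which $\operatorname{Re}(e^{i\theta}f)=\operatorname{Re}\varphi$ has a critical point at $z_0$.
\item The normal form $\varphi-\varphi(z_0)=\psi^k$ with $k\ge2$ shows that the level set through $z_0$ has $2k\ge4$ branches.
\item An injective map cannot send these branches into a line through $f(z_0)$, since that line has only two half-lines at the point.
\end{itemize}

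Compared with the paper's bare citation, your write-up shows that the only analytic input is the local normal form of an analytic function at a critical point. The degenerate situations are disposed of cleanly: $h'(z_0)=g'(z_0)=0$ (any $\theta$ works), and $\varphi$ constant (an injective map of a disk into a line is impossible).

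Two points are left implicit but are harmless:
\begin{itemize}
\item The zero set of $u$ coincides with that of $\operatorname{Re}(\varphi-\varphi(0))$ because $\operatorname{Re}\varphi(0)=u(0)=0$.
\item The pigeonhole step already works with three branches, so $k\ge2$ is used with room to spare. Only the regular case $k=1$, with its two branches, escapes the argument.
\end{itemize}
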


Combined with the inverse function theorem, this means that for harmonic mappings the nonvanishing Jacobian is exactly the local-diffeomorphism condition.  In the sense-preserving normalized class, one requires $\Jac_f>0$ throughout $\D$.

\subsection{Harmonic convolution}
Suppose
\[
 f=h+\overline g,
 \quad
 F=H+\overline G,
\]
where
\[
 h(z)=z+\sum_{n=2}^\infty a_nz^n,
 \quad
 g(z)=\sum_{n=1}^\infty b_nz^n,
\]
and similarly
\[
 H(z)=z+\sum_{n=2}^\infty A_nz^n,
 \quad
 G(z)=\sum_{n=1}^\infty B_nz^n.
\]
Their harmonic convolution is
\begin{equation}\label{eq:harmonic-convolution}
 (f\convh F)(z)
 = (h*H)(z)+\overline{(g*G)(z)},
\end{equation}
where
\[
 (h*H)(z)=z+\sum_{n=2}^\infty a_nA_nz^n,
 \quad
 (g*G)(z)=\sum_{n=1}^\infty b_nB_nz^n.
\]

For the self-convolution $F=f\convh f$, if
\[
 f(z)=z+\sum_{n=2}^\infty M_nz^n
 +\overline{\sum_{n=2}^\infty N_nz^n},
\]
then
\begin{equation}\label{eq:selfconv-series}
 F(z)=z+\sum_{n=2}^\infty M_n^2z^n
 +\overline{\sum_{n=2}^\infty N_n^2z^n}.
\end{equation}

\subsection{The Rad\'o-Kneser-Choquet theorem}
The geometric input of our construction is the classical Rad\'o-Kneser-Choquet theorem.  Rad\'o formulated the underlying problem in 1926, Kneser \cite{Kneser1926} supplied a proof, and Choquet \cite{Choquet1945} later rediscovered and generalized the result; see Duren \cite[Chapter 3]{Duren2004} for a modern account.

\begin{theorem}\label{thm:RKC}
Let $\Omega\subset\C$ be a bounded convex Jordan domain and let
\(
 \gamma:\T\to\partial\Omega
\)
be an orientation-preserving homeomorphism.  Then the Poisson extension $\Poisson[\gamma]$ is a harmonic diffeomorphism of $\D$ onto $\Omega$.
\end{theorem}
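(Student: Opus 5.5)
The plan is the classical argument via linear functionals and level sets, followed by a degree argument for global injectivity. Set $f=\Poisson[\gamma]$. Since $\gamma$ is continuous, $f$ extends continuously to $\overline{\D}$ with $f|_{\T}=\gamma$. Since the Poisson kernel is a positive probability density, $f(z)$ is an average of points of $\partial\Omega$. Hence $f(\D)$ lies in the closed convex hull $\overline\Omega$, and in fact $f(\D)\subset\Omega$. Indeed, if $f(z)$ lay on a supporting line $\ell$ of $\overline\Omega$, then the averaged values of $\gamma$ would all lie on $\ell$. Because $\gamma$ is a homeomorphism onto $\partial\Omega$, the set $\gamma^{-1}(\partial\Omega\setminus\ell)$ is a nonempty open arc of $\T$, so this is impossible.

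The key step, and the one I expect to be the real obstacle, is $\Jac_f\neq0$ in $\D$. Suppose $\Jac_f(z_0)=0$. Then the real differential of $f$ at $z_0$ is singular, so there is a unit vector $a$ with $\nabla u(z_0)=0$, where
\[
u=\operatorname{Re}(\bar a f).
\]
This $u$ is a real harmonic function with boundary values $\operatorname{Re}(\bar a\gamma)$. Put $c=u(z_0)$, so that $u-c$ has a critical zero at $z_0$ (and $u\not\equiv c$, by the first paragraph). Locally $u-c$ behaves like $\operatorname{Re}(\alpha (z-z_0)^k)$ with $k\ge2$. Hence the zero set of $u-c$ near $z_0$ consists of at least $2k\ge4$ analytic arcs, separating at least four sectors on which $u-c$ alternates in sign. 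By the maximum principle, none of these arcs can close up to bound a subregion of $\D$. Following them out, they give at least four disjoint components of $\{u\ne c\}$ reaching $\T$, with alternating signs. Consequently $\operatorname{Re}(\bar a\gamma)-c$ must change sign at least four times around $\T$.

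Convexity gives the contradiction. The line $\{w:\operatorname{Re}(\bar a w)=c\}$ passes through the interior point $f(z_0)\in\Omega$. It therefore meets the convex Jordan curve $\partial\Omega$ in exactly two points. It splits $\partial\Omega$ into two open arcs, on one of which $\operatorname{Re}(\bar a w)>c$ and on the other $<c$. Pulling back by the homeomorphism $\gamma$, the function $\operatorname{Re}(\bar a\gamma)-c$ changes sign exactly twice on $\T$. The delicate point is making the four components rigorous. I would argue that each sign-component of $\{u-c\neq0\}$ adjacent to $z_0$ has closure meeting $\T$ in a set where the boundary function has that strict sign. Distinct same-sign components can only meet $\T$ in disjoint arcs separated by opposite-sign arcs.

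Finally, I would upgrade to a diffeomorphism. By continuity $\Jac_f$ has constant sign in $\D$. Since $\gamma$ has winding number $+1$ about each point of $\Omega$, the argument principle for harmonic maps shows the sign is positive. It also shows that for every $w\in\Omega$, applying it on circles $|z|=r\to1$ where $f\neq w$ near the boundary, the number of preimages of $w$ counted with orientation equals the winding number of $\gamma$ around $w$, which is $1$. As all local indices are $+1$, each $w\in\Omega$ has exactly one preimage. Together with $f(\D)\subset\Omega$, this shows $f$ is a sense-preserving harmonic diffeomorphism of $\D$ onto $\Omega$, by the inverse function theorem.
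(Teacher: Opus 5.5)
Your argument is correct and is essentially the classical Kneser--Choquet proof that the paper relies on; the paper gives no proof of its own and cites Kneser, Choquet and Duren's Chapter~3. That proof has the same three steps: $f(\D)\subset\Omega$ via positivity of the Poisson kernel and a supporting line; a critical point of $\operatorname{Re}(\bar a f)$ forcing at least four boundary sign changes, against exactly two for a line through an interior point of a convex curve; and the argument principle for orientation and global injectivity.

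The step you flag as delicate is settled in the standard way. Suppose two same-sign sectors at $z_0$ lay in a single component of $\{u>c\}$. Then a Jordan loop through $z_0$, lying in that component apart from $z_0$, would enclose a component of $\{u<c\}$ on whose boundary $u=c$, contradicting the maximum principle.
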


We shall only need $\Omega=\D$.  The strength of Theorem \ref{thm:RKC} is that no smallness condition on the boundary distortion is required: topological monotonicity on the boundary together with convexity of the image is sufficient.

\subsection{Real-affine postcomposition}
A real-linear map of the plane can be written as
\begin{equation}\label{eq:real-linear}
 T(w)=\alpha w+\beta\overline w,
\end{equation}
where $\alpha,\beta\in\C$.  Its Jacobian is
\[
 \Jac_T=|\alpha|^2-|\beta|^2.
\]
If this is positive, then $T$ is an orientation-preserving real-linear automorphism.  Postcomposition of a harmonic diffeomorphism by such a map preserves harmonicity, injectivity and orientation, while turning circles into ellipses.  This simple observation will give bounded convexity for free after the disk self-map has been constructed.
\vskip .10in
\section{A rational circle homeomorphism}\label{sec:boundary}

For $r\in(-1,1)$, set
\begin{equation}\label{eq:Blaschke}
 B_r(z)=\frac{z-r}{1-rz}.
\end{equation}
Note that $|B_r|=1$ on $\T$.  We use the particular parameters
\begin{equation}\label{eq:ab}
 a=-\frac{10}{11},\quad b=\frac1{44},
\end{equation}
and define
\begin{equation}\label{eq:Gamma}
 \Gamma(z)=z\frac{B_a(z)}{B_b(z)}
 =\frac{z(z-a)(1-bz)}{(1-az)(z-b)}
 \quad (|z|=1).
\end{equation}
Since $B_a$ and $B_b$ are unimodular on $\T$, it implies $|\Gamma|=1$ .

The map is not the boundary trace of an analytic disk self-map because of the quotient by $B_b$; that is precisely what supplies negative Fourier modes.  Nevertheless its restriction to the circle is a very regular degree-one homeomorphism.

\begin{lemma}[Angular derivative of a real Blaschke factor]\label{lem:angular-Blaschke}
Let $r\in(-1,1)$ and write $z=\e^{it}$.  Then
\[
 \frac{d}{dt}\arg B_r(\e^{it})
 =\frac{1-r^2}{1-2r\cos t+r^2}.
\]
\end{lemma}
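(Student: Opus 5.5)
The plan is a direct computation of the logarithmic derivative of $B_r$ along the circle. Since $B_r$ is holomorphic and nonvanishing near each point of $\T$ except at $z=r$, we can write
\[
 \frac{d}{dt}\arg B_r(\e^{it})=\operatorname{Im}\frac{d}{dt}\log B_r(\e^{it})
 =\operatorname{Re}\Bigl(z\,\frac{B_r'(z)}{B_r(z)}\Bigr)\Big|_{z=\e^{it}}.
\]
The second equality uses $\frac{d}{dt}=iz\frac{d}{dz}$. The point $z=r$ is not on $\T$ because $|r|<1$, so this is legitimate everywhere on $\T$. A continuous branch of $\arg B_r(\e^{it})$ exists locally, and its derivative does not depend on the branch chosen.

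Next we compute the logarithmic derivative. Since
\[
 \log B_r(z)=\log(z-r)-\log(1-rz),
\]
we get
\[
 \frac{B_r'(z)}{B_r(z)}=\frac{1}{z-r}+\frac{r}{1-rz}=\frac{1-r^2}{(z-r)(1-rz)}.
\]
Hence
\[
 z\frac{B_r'}{B_r}=\frac{(1-r^2)\,z}{(z-r)(1-rz)}.
\]
On $|z|=1$ we have $1-rz=-rz\,(1-r\bar z)\cdot\frac{1}{-r}\cdot\ldots$; more cleanly, $1-rz=z(\bar z-r)=z\,\overline{(z-r)}$, because $r$ is real and $z\bar z=1$. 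Therefore
\[
 \frac{z}{(z-r)(1-rz)}=\frac{1}{|z-r|^2},
\]
which is real and positive.

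Finally, $|\e^{it}-r|^2=1-2r\cos t+r^2$. This gives
\[
 \frac{d}{dt}\arg B_r(\e^{it})=\frac{1-r^2}{1-2r\cos t+r^2},
\]
which is the Poisson kernel. The only point needing care is the identity $1-rz=z\overline{(z-r)}$ on $\T$, which requires $r$ to be real; otherwise the argument is routine. Consequently, $\arg B_r$ is strictly increasing on $\T$. This positivity is what will later be used, via $\arg\Gamma=t+\arg B_a-\arg B_b$, to show that $\Gamma$ is a degree-one circle homeomorphism.
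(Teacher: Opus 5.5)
Your proof is correct and follows essentially the same route as the paper: take the imaginary part of the logarithmic derivative $iz\,B_r'(z)/B_r(z)$ on $\T$ and simplify, with your identity $1-rz=z\overline{(z-r)}$ (valid for real $r$ and $|z|=1$) spelling out the ``direct simplification'' the paper leaves implicit. Delete the abandoned false start $1-rz=-rz\,(1-r\bar z)\cdot\frac{1}{-r}\cdot\ldots$: on $\T$ the displayed product equals $z-r$, not $1-rz$, so as written it is incorrect.
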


\begin{proof}
The logarithmic derivative gives
\[
 \frac{d}{dt}\log B_r(\e^{it})
 = i\e^{it}\frac{B_r'(\e^{it})}{B_r(\e^{it})}.
\]
Its imaginary part is the derivative of the argument.  Since
\[
 B_r'(z)=\frac{1-r^2}{(1-rz)^2},
\]
a direct simplification on $|z|=1$ yields the displayed Poisson-kernel expression.
\end{proof}

\begin{proposition}\label{prop:Gamma-homeo}
The map $\Gamma:\T\to\T$ defined in \eqref{eq:Gamma} is an orientation-preserving $C^\infty$ diffeomorphism.
\end{proposition}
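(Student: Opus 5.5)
The plan is to write $\Gamma(\e^{it})=\e^{i\theta(t)}$ with a smooth lift $\theta:\R\to\R$ and show that $\theta'(t)>0$ for all $t$ and that $\theta(t+2\pi)-\theta(t)=2\pi$. Orientation-preserving diffeomorphism then follows.

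\textbf{Step 1: smoothness.} On $|z|=1$ the only possible singularities of $\Gamma$ are the zeros of $1-az$ and of $z-b$. These are $z=1/a=-11/10$ and $z=b=1/44$, and neither lies on $\T$. Hence $\Gamma$ is real-analytic on $\T$ and unimodular there, so a smooth lift $\theta$ exists.

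\textbf{Step 2: the angular derivative.} Since arguments add under products and subtract under quotients, Lemma~\ref{lem:angular-Blaschke} gives
\[
\theta'(t)=1+\frac{1-a^2}{1-2a\cos t+a^2}-\frac{1-b^2}{1-2b\cos t+b^2}.
\]
The middle term is positive, since $|a|<1$. The last term is the Poisson kernel $P_b(t)$, which satisfies
\[
P_b(t)\le \frac{1-b^2}{(1-|b|)^2}=\frac{1+|b|}{1-|b|}=\frac{45}{43}.
\]
The crude bound $\theta'>1-45/43$ fails, so the middle term has to be used. It is minimized at $\cos t=-1$, where it equals $(1-|a|)/(1+|a|)=1/21$; this gives $1+1/21-45/43>0$? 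That works out to $1+0.0476-1.0465=0.0011>0$, which is positive but tight. I would instead bound the expression pointwise by writing it as a function of $c=\cos t\in[-1,1]$:
\[
\phi(c)=1+\frac{1-a^2}{1-2ac+a^2}-\frac{1-b^2}{1-2bc+b^2}.
\]
With $a<0<b$, the middle term decreases as $c$ decreases, while $-P_b$ increases as $c$ decreases. So the worst case is not simply at an endpoint, and both bounds should be combined. The honest route is to clear denominators. Since both denominators are positive, $\theta'>0$ is equivalent to
\[
(1-2ac+a^2)(1-2bc+b^2)+(1-a^2)(1-2bc+b^2)-(1-b^2)(1-2ac+a^2)>0,
\]
which is a polynomial in $c$ of degree at most $2$ with rational coefficients. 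Positivity on $[-1,1]$ is then checked by evaluating at the endpoints and at the vertex, if the vertex lies in $[-1,1]$. The numbers $a=-10/11$ and $b=1/44$ make this an exact rational computation. This verification is the main obstacle, because the margin is small. Even so, the separate-extremes estimate above already gives $1+1/21-45/43=(903+43-945)/903=1/903>0$. That estimate is valid, since $\theta'\ge 1+\min(\text{middle})-\max(P_b)$, so positivity holds and the polynomial check serves only as confirmation.

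\textbf{Step 3: degree one.} The total change of argument equals $2\pi$ times the degree. Integrating $\theta'$ over $[0,2\pi]$, each Poisson kernel integrates to $2\pi$, so
\[
\theta(t+2\pi)-\theta(t)=2\pi+2\pi-2\pi=2\pi.
\]
A smooth map of $\T$ of degree one with $\theta'>0$ is bijective, and its inverse is smooth by the inverse function theorem. This establishes Proposition~\ref{prop:Gamma-homeo}.
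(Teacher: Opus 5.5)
Your argument is correct. Its skeleton, a positive angular derivative plus degree one, is the paper's, but you verify positivity differently.

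\textbf{Positivity.} The paper clears denominators and writes $\theta'(t)=\frac{19360x^2+20856x-41119}{(88x-1937)(220x+221)}$ with $x=\cos t$. It then notes that the denominator is negative on $[-1,1]$, and that the numerator is a convex quadratic which is negative at $x=\pm1$. You use only the two-sided Poisson-kernel bound $\frac{1-|r|}{1+|r|}\le \frac{1-r^2}{1-2r\cos t+r^2}\le \frac{1+|r|}{1-|r|}$. This gives $\theta'\ge 1+\frac{1}{21}-\frac{45}{43}=\frac{1}{903}>0$ with no polynomial algebra. It also turns the parameter requirement into the closed-form condition $2(1-|b|)>(1+|a|)(1+|b|)$. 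The paper's computation buys exactness at every $t$, so it would still settle the sign in configurations where bounding the two kernels separately is lossy, for instance $a,b$ of the same sign.

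\textbf{Degree.} You integrate the kernels to get $2\pi+2\pi-2\pi$, while the paper adds the degrees of the factors. The two are equivalent.

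\textbf{Incorrect side remarks.} Several of your exploratory remarks are wrong, although the final estimate does not depend on them.
\begin{enumerate}[label=(\roman*)]
\item Since $a<0$, the denominator $1-2a\cos t+a^2=1+2|a|\cos t+a^2$ is largest at $\cos t=1$. So the middle term attains its minimum $1/21$ at $\cos t=1$, not at $\cos t=-1$.
\item The middle term increases, not decreases, as $c$ decreases. Together with $-P_b$ also increasing as $c$ decreases, this means $\phi$ is decreasing on $[-1,1]$.
\item The worst case is therefore exactly the endpoint $c=1$, where both extremes are attained at once. Your separate-extremes bound is sharp: $\min_t\theta'(t)=\theta'(0)=\frac{1}{903}$. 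This agrees with the paper's formula at $x=1$, namely $\frac{-903}{(-1849)\cdot 441}=\frac{1}{903}$.
\end{enumerate}
Remove the sentence claiming the worst case is not at an endpoint.
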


\begin{proof}
Let $z=\e^{it}$ and put $x=\cos t$.  By Lemma \ref{lem:angular-Blaschke}, we have
\begin{align}
 \frac{d}{dt}\arg\Gamma(\e^{it})
 &=1+\frac{1-a^2}{1-2ax+a^2}
   -\frac{1-b^2}{1-2bx+b^2}.
\end{align}
Substituting \eqref{eq:ab} and reducing to a common denominator gives the exact identity
\begin{equation}\label{eq:angular-rational}
 \frac{d}{dt}\arg\Gamma(\e^{it})
 =\frac{19360x^2+20856x-41119}
 {(88x-1937)(220x+221)}.
\end{equation}
For $-1\le x\le1$, we have
\[
 88x-1937<0,
 \quad
 220x+221>0,
\]
so the denominator in \eqref{eq:angular-rational} is negative.  The numerator
\[
 N(x)=19360x^2+20856x-41119
\]
is a convex quadratic.  Hence its maximum over $[-1,1]$ is attained at an endpoint, and
\[
 N(1)=-903<0,
 \quad
 N(-1)=-42615<0.
\]
Thus the numerator is negative throughout $[-1,1]$, and consequently
\begin{equation}\label{eq:angular-positive}
 \frac{d}{dt}\arg\Gamma(\e^{it})>0
 \quad(t\in\R).
\end{equation}

It remains only to identify the degree.  The factor $z$ has degree $1$ on $\T$, each $B_r$ has degree $1$, and therefore
\[
 \deg\Gamma=1+1-1=1.
\]
A smooth circle map of degree one with strictly positive angular derivative has a strictly increasing lift $\theta:\R\to\R$ satisfying $\theta(t+2\pi)=\theta(t)+2\pi$.  Hence it is an orientation-preserving diffeomorphism of the circle.
\end{proof}

\begin{remark}\label{rem:not-delicate}
{\rm The sign verification above is deliberately organized without locating any root of $N$.  The endpoint test is enough because $N$ is convex.  This elementary observation is useful later in the stability argument: strict positivity of the angular derivative is a compact, open condition in the two real parameters.}
\end{remark}

Let
\begin{equation}\label{eq:U}
 U=\Poisson[\Gamma]
\end{equation}
be the Poisson extension of the boundary values.  We immediately obtain the geometric half of the construction.

\begin{corollary}\label{cor:U-diffeo}
The map $U$ is a sense-preserving harmonic diffeomorphism of $\D$ onto $\D$.
\end{corollary}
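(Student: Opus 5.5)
The plan is to apply the Radó--Kneser--Choquet theorem (Theorem \ref{thm:RKC}) with $\Omega=\D$ and $\gamma=\Gamma$. By Proposition \ref{prop:Gamma-homeo}, $\Gamma:\T\to\T$ is an orientation-preserving $C^\infty$ diffeomorphism, and in particular an orientation-preserving homeomorphism onto $\partial\D=\T$. The disk is a bounded convex Jordan domain, so Theorem \ref{thm:RKC} applies directly. It yields that $U=\Poisson[\Gamma]$ is a harmonic diffeomorphism of $\D$ onto $\D$.

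It remains to confirm that $U$ is sense-preserving, meaning $\Jac_U>0$ throughout $\D$, and not just a diffeomorphism. I would argue as follows.

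First, $\Jac_U$ is continuous on $\D$. It never vanishes there, since $U$ is a diffeomorphism; equivalently, one can cite Lewy's Theorem \ref{thm:Lewy} for the univalent harmonic map $U$. Because $\D$ is connected, $\Jac_U$ has constant sign on $\D$.

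Second, I would fix that sign by a degree argument. For $r$ close to $1$, the circles $|z|=r$ are mapped by $U$ to curves that converge uniformly to $\Gamma(\T)=\T$ traversed with winding number $+1$ about $0$; this follows from continuity of the Poisson extension up to the boundary for continuous data. An orientation-reversing diffeomorphism onto $\D$ would instead make these image curves wind $-1$ times around $0=U(z_0)$ for the appropriate preimage $z_0$. Hence $\Jac_U>0$.

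As an alternative for this second step, one could instead invoke the standard form of the Radó--Kneser--Choquet theorem (as in Duren \cite[Chapter 3]{Duren2004}). That form already asserts that an orientation-preserving boundary homeomorphism yields a sense-preserving extension, with $|g'|<|h'|$.

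The only delicate point is this orientation bookkeeping, since Theorem \ref{thm:RKC} as stated only gives a diffeomorphism. The winding-number argument above handles it. No computation with the specific parameters $a,b$ is needed beyond what Proposition \ref{prop:Gamma-homeo} already supplies.
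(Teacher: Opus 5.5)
Your proposal is correct and matches the paper's proof, which just applies Theorem \ref{thm:RKC} with $\Omega=\D$ and $\gamma=\Gamma$, using Proposition \ref{prop:Gamma-homeo}. Your winding-number argument that fixes the sign of $\Jac_U$ is a valid supplement: it makes explicit the sense-preservation that the paper leaves implicit, since Theorem \ref{thm:RKC} as stated only yields a diffeomorphism.
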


\begin{proof}
Applying Theorem \ref{thm:RKC} with $\Omega=\D$ and using Proposition \ref{prop:Gamma-homeo},
we get the desired assertion.
\end{proof}
\vskip .10in
\section{Exact Fourier expansion}\label{sec:fourier}

We now exploit the special rational form of $\Gamma$.  Since the parameters are real, all Fourier coefficients below are real.

Substituting \eqref{eq:ab} into \eqref{eq:Gamma}, we have
\begin{equation}\label{eq:Gamma-rational-explicit}
 \Gamma(z)
 =-\frac{z(z-44)(11z+10)}{(10z+11)(44z-1)}.
\end{equation}
A partial-fraction decomposition gives
\begin{equation}\label{eq:partial-fraction}
 \Gamma(z)
 =-\frac z{40}+\frac{9717}{8800}
 +\frac{79335}{86944(44z-1)}
 -\frac{104181}{49400(10z+11)}.
\end{equation}
The annulus
\[
 |b|<|z|<\frac1{|a|}
\]
contains $\T$.  On this annulus, the last two terms in \eqref{eq:partial-fraction} expand in opposite directions:
\begin{align}
 \frac{1}{44z-1}
 &=\frac{1}{44z}\frac1{1-(44z)^{-1}}
 =\frac1{44}\sum_{k=0}^\infty b^kz^{-k-1},\label{eq:negative-expand}\\
 \frac1{10z+11}
 &=\frac1{11}\frac1{1+(10/11)z}
 =\frac1{11}\sum_{k=0}^\infty a^kz^k.\label{eq:positive-expand}
\end{align}
Thus on the unit circle \(\mathbb T\), we obtain an absolutely convergent Laurent series
\begin{equation}\label{eq:Laurent}
 \Gamma(z)=\sum_{n=-\infty}^{\infty}c_nz^n.
\end{equation}

\begin{proposition}[Fourier coefficients]\label{prop:Fourier}
The coefficients in \eqref{eq:Laurent} satisfy
\begin{align}
 c_0&=\frac{79335}{86944},\label{eq:c0}\\
 c_1&=\frac{295}{1976},\label{eq:c1}\\
 c_2&=-\frac{861}{5434},\label{eq:c2}\\
 c_{-1}&=\frac{79335}{3825536},\label{eq:cm1}
\end{align}
and, for every $n\ge2$,
\begin{equation}\label{eq:geomcoeff}
 c_n=c_2a^{n-2},
 \quad
 c_{-n}=c_{-1}b^{n-1}.
\end{equation}
\end{proposition}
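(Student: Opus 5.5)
The plan is to read the Laurent coefficients directly off the partial-fraction decomposition \eqref{eq:partial-fraction}, using the two geometric expansions \eqref{eq:negative-expand} and \eqref{eq:positive-expand}. Both expansions are valid on the annulus $|b|<|z|<1/|a|$, which contains $\T$. Before that, I would make sure \eqref{eq:partial-fraction} itself is correct.

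\textbf{Step 1: verify \eqref{eq:partial-fraction}.} I will use the standard residue-type evaluations on \eqref{eq:Gamma-rational-explicit}.
\begin{itemize}
\item The numerator of the coefficient $A$ of $1/(44z-1)$ is $-z(z-44)(11z+10)/(10z+11)$ evaluated at $z=1/44$. This should give $A=\frac{79335}{86944}$.
\item Similarly, $-C$ is $-z(z-44)(11z+10)/(44z-1)$ evaluated at $z=-11/10$. This should give $C=\frac{104181}{49400}$.
\item The polynomial part comes from long division of a cubic by a quadratic. The leading coefficient is $-11/440=-1/40$. The constant $\frac{9717}{8800}$ is fixed by comparing either the next coefficient or the value at $z=0$, where $\Gamma(0)=0$.
\end{itemize}
As a safeguard I would also compare one further value, for instance $z=1$, where $\Gamma(1)=1$.

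\textbf{Step 2: substitute the expansions.} On $\T$ this gives
\[
\Gamma(z)=-\frac{z}{40}+\frac{9717}{8800}+\frac{A}{44}\sum_{k\ge0}b^kz^{-k-1}-\frac{C}{11}\sum_{k\ge0}a^kz^k .
\]
Both series converge absolutely and uniformly on $\T$, so the Laurent coefficients are unique. They can be collected term by term.

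\textbf{Step 3: collect the coefficients.}
\begin{itemize}
\item Negative indices come only from the first series: $c_{-n}=\frac{A}{44}b^{n-1}$ for $n\ge1$. Hence $c_{-1}=A/44=\frac{79335}{3825536}$ and $c_{-n}=c_{-1}b^{n-1}$.
\item For $n\ge2$ only the second series contributes: $c_n=-\frac{C}{11}a^n$. Setting $c_2=-\frac{C}{11}a^2$ then gives $c_n=c_2a^{n-2}$ immediately.
\item The exceptional indices are $c_1=-\frac1{40}-\frac{C}{11}a$ and $c_0=\frac{9717}{8800}-\frac{C}{11}$.
\end{itemize}

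\textbf{Step 4: reduce the fractions.} What remains is pure rational arithmetic: reduce $c_0$, $c_1$, $c_2$ to the stated forms \eqref{eq:c0}, \eqref{eq:c1}, \eqref{eq:c2}, using $C/11=\frac{104181}{543400}$ and $a=-10/11$.

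\textbf{Main obstacle.} The only real risk is arithmetic: correctness of \eqref{eq:partial-fraction} and the fraction reductions. To guard against this I would use two independent checks.
\begin{itemize}
\item The coefficients must satisfy $\sum_n c_n=\Gamma(1)=1$. Summing the geometric tails gives a closed-form identity in $c_0,c_1,c_2,c_{-1},a,b$ that can be checked exactly.
\item The coefficient $c_0$ can be recomputed independently as $\frac{1}{2\pi i}\oint_{\T}\Gamma(z)\,dz/z$ via residues at $z=0$, $z=b$ and $z=-11/10$.
\end{itemize}
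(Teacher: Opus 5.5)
Your proof is correct and is essentially the paper's own argument: you read the coefficients off the partial-fraction decomposition using the two geometric expansions on $|b|<|z|<1/|a|$, and only $c_0$ and $c_1$ pick up the extra contributions from $9717/8800$ and $-z/40$. One slip is in your optional cross-check, not in the proof: $\frac{1}{2\pi i}\oint_{\T}\Gamma(z)\,\frac{dz}{z}$ equals only the residue at $z=b$, because $z=0$ is removable ($\Gamma(0)=0$) and $z=-11/10$ lies outside $\overline{\D}$; that residue does give $79335/86944$, whereas also adding the residue $104181/543400$ at $-11/10$ would give $9717/8800$ instead.
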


\begin{proof}
It follows from \eqref{eq:negative-expand} that
\[
 \frac{79335}{86944(44z-1)}
 =\frac{79335}{3825536}\sum_{k=0}^\infty b^kz^{-k-1},
\]
which yields \eqref{eq:cm1} and the negative-frequency relation in \eqref{eq:geomcoeff}.

For the positive frequencies, write
\[
 C^*=-\frac{104181}{543400}.
\]
Then \eqref{eq:positive-expand} contributes $C^*a^nz^n$ for $n\ge0$, while the term $-z/40$ modifies only the coefficient of $z$.  Hence
\[
 c_1=C^*a-\frac1{40}=\frac{295}{1976},
\]
\[
 c_2=C^*a^2=-\frac{861}{5434},
\]
and for $n\ge2$,
\[
 c_n=C^*a^n=c_2a^{n-2}.
\]
Combining the constant part of \eqref{eq:partial-fraction} with $C^*$ gives \eqref{eq:c0}.
\end{proof}

Since $U$ is the Poisson extension of the boundary series, its harmonic expansion is
\begin{equation}\label{eq:U-series}
 U(z)=c_0+\sum_{n=1}^\infty c_nz^n
 +\sum_{n=1}^\infty c_{-n}\overline z^{\,n}.
\end{equation}
The geometric decay in \eqref{eq:geomcoeff} gives absolute and locally uniform convergence together with all differentiated series.

\begin{remark}\label{rem:why-quotient}
{\rm If one used only an analytic finite Blaschke product as boundary data, its negative Fourier coefficients would vanish and the Poisson extension would remain analytic.  The analytic convex class is closed under self-convolution, so this could not produce a counterexample.  The quotient $B_a/B_b$ creates a controlled negative-frequency tail, while the angular derivative can still be kept positive.  The two tails decay at different geometric rates $|a|$ and $|b|$, which later makes the coefficient squares tractable.}
\end{remark}
\vskip .10in
\section{Affine normalization and the image ellipse}\label{sec:normalization}

The disk diffeomorphism $U$ is not normalized in the sense of $\SHZ$, but a real-affine postcomposition fixes this exactly.

Set
\begin{equation}\label{eq:Delta}
 \Delta=c_1^2-c_{-1}^2.
\end{equation}
Using \eqref{eq:c1} and \eqref{eq:cm1}, we obtain
\begin{equation}\label{eq:Delta-exact}
 \Delta=
 \frac{24606462475}{1125748129792}>0.
\end{equation}
Define
\begin{equation}\label{eq:T}
 T(w)=\frac{c_1w-c_{-1}\overline w}{\Delta}
\end{equation}
and
\begin{equation}\label{eq:f-def}
 f(z)=T\bigl(U(z)-c_0\bigr).
\end{equation}

\begin{proposition}\label{prop:normalization}
The map $T$ is an orientation-preserving real-linear automorphism of $\C$, and $f$ belongs to $\KHZ$.
\end{proposition}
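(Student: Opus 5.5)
The plan is to check the claims in order: that $T$ is an orientation-preserving automorphism, that $f$ is a sense-preserving harmonic diffeomorphism onto a convex domain (an ellipse), and finally the normalizations $f(0)=0$, $h'(0)=1$, $g'(0)=0$.

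\emph{Step 1: $T$ is an automorphism.} Write $T(w)=\alpha w+\beta\overline w$ with $\alpha=c_1/\Delta$ and $\beta=-c_{-1}/\Delta$, both real. Its Jacobian is
\[
 \Jac_T=\alpha^2-\beta^2=\frac{c_1^2-c_{-1}^2}{\Delta^2}=\frac1\Delta .
\]
This is positive by \eqref{eq:Delta-exact}. So, by the discussion of real-affine postcomposition in Section \ref{sec:prelim}, $T$ is an orientation-preserving real-linear automorphism of $\C$.

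\emph{Step 2: geometry of $f$.} By Corollary \ref{cor:U-diffeo}, $U$ is a sense-preserving harmonic diffeomorphism of $\D$ onto $\D$. Hence $U-c_0$ is a sense-preserving harmonic diffeomorphism onto the translated disk $\D-c_0$. Composing with $T$ keeps harmonicity, because $T$ is real-linear and $T(\phi+\overline\psi)=(\alpha\phi+\beta\psi)+\overline{(\bar\beta\phi+\bar\alpha\psi)}$. It also keeps injectivity and the orientation, since the Jacobians multiply. Therefore $f$ is a sense-preserving univalent harmonic map, and $f(\D)=T(\D-c_0)$ is the image of a disk under an invertible real-linear map. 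That image is an ellipse, and in particular it is bounded and convex.

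\emph{Step 3: normalization.} From \eqref{eq:U-series},
\[
 U(z)-c_0=\sum_{n\ge1}c_nz^n+\overline{\sum_{n\ge1}c_{-n}z^n},
\]
where we used that the $c_{-n}$ are real. Applying $T$ with real coefficients gives $f=h+\overline g$ with
\[
 h=\frac{c_1\phi-c_{-1}\psi}{\Delta},\qquad g=\frac{c_1\psi-c_{-1}\phi}{\Delta},
\]
where $\phi(z)=\sum_{n\ge1}c_nz^n$ and $\psi(z)=\sum_{n\ge1}c_{-n}z^n$. Then $f(0)=0$ is immediate. Next,
\[
 h'(0)=\frac{c_1^2-c_{-1}^2}{\Delta}=1,\qquad g'(0)=\frac{c_1c_{-1}-c_{-1}c_1}{\Delta}=0 .
\]
This places $f$ in $\KHZ$.

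There is no real obstacle. The only points needing care are two bookkeeping checks: that the analytic and co-analytic parts are identified correctly after applying $T$, and that all coefficients are real, so no conjugates intrude.
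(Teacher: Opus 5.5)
Your proposal is correct and follows the paper's proof essentially step for step. You compute $\Jac_T=(c_1^2-c_{-1}^2)/\Delta^2=1/\Delta>0$, invoke Corollary~\ref{cor:U-diffeo} to get a sense-preserving univalent harmonic map onto the convex image $T(\D-c_0)$, and read off $h'(0)=1$, $g'(0)=0$ from the same decomposition as in \eqref{eq:hg-formulas}; your explicit check that $T$ preserves harmonicity and your use of the reality of the $c_{-n}$ (so $\sum c_{-n}\overline z^{\,n}=\overline{\sum c_{-n}z^n}$) are details the paper leaves implicit.
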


\begin{proof}
The complex-linear and anti-linear coefficients of $T$ are $c_1/\Delta$ and $-c_{-1}/\Delta$, respectively.  It follows that
\[
 \Jac_T
 =\frac{c_1^2-c_{-1}^2}{\Delta^2}
 =\frac1\Delta>0.
\]
Thus $T$ is an orientation-preserving real-linear automorphism.  By Corollary \ref{cor:U-diffeo}, $U-c_0$ is a harmonic diffeomorphism from $\D$ onto the translated disk $\D-c_0$, and hence $f$ is a sense-preserving univalent harmonic mapping onto a convex domain.

To check normalization, write
\[
 U-c_0=H_0+\overline{G_0},
\]
where
\[
 H_0(z)=\sum_{n=1}^{\infty}c_nz^n,
 \quad
 G_0(z)=\sum_{n=1}^{\infty}c_{-n}z^n.
\]
Then
\begin{equation}\label{eq:hg-formulas}
 h=\frac{c_1H_0-c_{-1}G_0}{\Delta},
 \quad
 g=\frac{c_1G_0-c_{-1}H_0}{\Delta},
\end{equation}
so $f=h+\overline g$.  At the origin, we have
\[
 h'(0)=\frac{c_1^2-c_{-1}^2}{\Delta}=1,
 \quad
 g'(0)=\frac{c_1c_{-1}-c_{-1}c_1}{\Delta}=0.
\]
Also $f(0)=0$ by construction.  Hence $f\in\SHZ$, and convexity of its image gives $f\in\KHZ$.
\end{proof}

The image can be described completely.

\begin{proposition}[Explicit ellipse]\label{prop:ellipse}
Writing $w=x+iy$, one has
\begin{equation}\label{eq:T-diagonal}
 T(w)=\frac{x}{c_1+c_{-1}}
 +i\frac{y}{c_1-c_{-1}}.
\end{equation}
Consequently, $f(\D)$ is a translate of an ellipse with semiaxes
\begin{equation}\label{eq:axes}
 R_x=\frac1{c_1+c_{-1}}
 =\frac{294272}{50035},
 \quad
 R_y=\frac1{c_1-c_{-1}}
 =\frac{3825536}{491785}.
\end{equation}
In particular, the image is bounded and strictly convex.
\end{proposition}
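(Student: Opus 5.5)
We verify \eqref{eq:T-diagonal} directly from the definition \eqref{eq:T}. Since $c_1,c_{-1},\Delta$ are real, write $w=x+iy$. Then
\[
 c_1w-c_{-1}\overline w=(c_1-c_{-1})x+i(c_1+c_{-1})y .
\]
Dividing by $\Delta=(c_1-c_{-1})(c_1+c_{-1})$ gives
\[
 T(w)=\frac{x}{c_1+c_{-1}}+i\,\frac{y}{c_1-c_{-1}},
\]
which is \eqref{eq:T-diagonal}. Both denominators are positive: by \eqref{eq:c1} and \eqref{eq:cm1}, $c_1>0$ and $c_1>c_{-1}>0$ (numerically $c_1\approx0.149$ and $c_{-1}\approx0.0207$). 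So $T$ is a diagonal real-linear map that scales the real axis by $R_x=1/(c_1+c_{-1})$ and the imaginary axis by $R_y=1/(c_1-c_{-1})$.

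Next we identify the image. By Corollary \ref{cor:U-diffeo}, $U(\D)=\D$, so $U(\D)-c_0=\D-c_0$ is the open unit disk centred at $-c_0$, with $c_0$ real. Since $T$ is linear,
\[
 f(\D)=T(\D)-T(c_0)=T(\D)-\frac{c_0}{c_1+c_{-1}} .
\]
Here $T(\D)=\{X+iY:\ X^2/R_x^2+Y^2/R_y^2<1\}$. So $f(\D)$ is the open ellipse with semiaxes $R_x,R_y$, translated along the real axis by $-c_0R_x$. An ellipse region is bounded and strictly convex, which gives the final assertion.

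The only remaining task is the exact arithmetic in \eqref{eq:axes}. This is the one step needing care, though it is routine. The denominators $1976=2^3\cdot13\cdot19$ and $3825536=1976\cdot1936=2^7\cdot11^2\cdot13\cdot19$ are related, since $1936=44^2$. Hence
\[
 c_1\pm c_{-1}=\frac{295\cdot1936\pm79335}{3825536}=\frac{571120\pm79335}{3825536}.
\]
This gives $c_1-c_{-1}=491785/3825536$ directly, so $R_y=3825536/491785$. For the sum, $c_1+c_{-1}=650455/3825536$. Since $650455=13\cdot50035$ and $3825536=13\cdot294272$, this reduces to $50035/294272$, so $R_x=294272/50035$.

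As a consistency check, the product of the two unreduced fractions is
\[
 (c_1+c_{-1})(c_1-c_{-1})=\frac{650455\cdot491785}{3825536^2}.
\]
Dividing numerator and denominator by $13$, this equals $24606462475/1125748129792$, which matches \eqref{eq:Delta-exact}.
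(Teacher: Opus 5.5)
Your proof is correct and follows the paper's argument. You diagonalize $T$ using that $c_{\pm1}$ are real and $\Delta=(c_1-c_{-1})(c_1+c_{-1})$, then apply $T$ to the translated disk $\D-c_0$ given by Corollary~\ref{cor:U-diffeo}. Your reduction of $c_1\pm c_{-1}$ over the common denominator $3825536$, including the cancellation of the factor $13$ in $R_x$, and your cross-check against \eqref{eq:Delta-exact} supply arithmetic the paper leaves implicit, and it is accurate.
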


\begin{proof}
Since $c_1,c_{-1}\in\R$, we have
\[
 c_1w-c_{-1}\overline w
 =(c_1-c_{-1})x+i(c_1+c_{-1})y.
\]
Using
\[
 \Delta=(c_1-c_{-1})(c_1+c_{-1})
\]
gives \eqref{eq:T-diagonal}.  Because $U(\D)=\D$, the map $U-c_0$ has image $\D-c_0$, and $T$ sends every Euclidean disk to an ellipse with the stated principal scaling factors.  Translation does not affect semiaxis lengths.
\end{proof}

\begin{remark}\label{rem:strong-bounded}
{\rm The counterexample to Problem \ref{q:Dorff} occurs not merely for an arbitrary bounded convex target but for one of the simplest smooth, strictly convex targets after disks themselves: an ellipse.  No polygonal corners, boundary singularities, or unbounded geometry are involved.}
\end{remark}
\vskip .10in
\section{Coefficient structure after normalization}\label{sec:coefficients}

We write
\begin{equation}\label{eq:f-expansion}
 f(z)=h(z)+\overline{g(z)},
\end{equation}
with
\begin{equation}\label{eq:hg-expansion}
 h(z)=z+\sum_{n=2}^\infty M_nz^n,
 \quad
 g(z)=\sum_{n=2}^\infty N_nz^n.
\end{equation}
By \eqref{eq:hg-formulas}, we have
\begin{equation}\label{eq:HnGn-general}
 M_n=\frac{c_1c_n-c_{-1}c_{-n}}{\Delta},
 \quad
 N_n=\frac{c_1c_{-n}-c_{-1}c_n}{\Delta}.
\end{equation}
The geometric Fourier tails imply a particularly simple form.

Define the rational constants
\begin{align}
 A&=\frac{c_1c_2}{\Delta}
 =-\frac{9890884608}{9139543205},\label{eq:A}\\
 B&=-\frac{c_{-1}^2b}{\Delta}
 =-\frac{251761689}{562995861428},\label{eq:B}\\
 C&=\frac{c_1c_{-1}b}{\Delta}
 =\frac{41190732}{12795360487},\label{eq:C}\\
 D&=-\frac{c_{-1}c_2}{\Delta}
 =\frac{1373955264}{9139543205}.\label{eq:D}
\end{align}

\begin{proposition}[Two-geometric-tail form]\label{prop:two-tail}
For every $n\ge2$,
\begin{equation}\label{eq:Hn-tail}
 M_n=Aa^{n-2}+Bb^{n-2},
\end{equation}
and
\begin{equation}\label{eq:Gn-tail}
 N_n=Da^{n-2}+Cb^{n-2}.
\end{equation}
\end{proposition}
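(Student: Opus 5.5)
The plan is to substitute the geometric forms \eqref{eq:geomcoeff} of Proposition \ref{prop:Fourier} directly into the general coefficient formulas \eqref{eq:HnGn-general}. For $n\ge2$ these give $c_n=c_2a^{n-2}$ and $c_{-n}=c_{-1}b^{n-1}=c_{-1}b\cdot b^{n-2}$. Hence
\[
 M_n=\frac{c_1c_2}{\Delta}\,a^{n-2}-\frac{c_{-1}^2b}{\Delta}\,b^{n-2},
 \qquad
 N_n=\frac{c_1c_{-1}b}{\Delta}\,b^{n-2}-\frac{c_{-1}c_2}{\Delta}\,a^{n-2}.
\]
Comparing with the definitions \eqref{eq:A}--\eqref{eq:D} identifies the four coefficients as $A$, $B$, $C$, $D$ respectively. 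This yields \eqref{eq:Hn-tail} and \eqref{eq:Gn-tail} at once, in symbolic form.

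Before that, I will confirm that \eqref{eq:HnGn-general} really follows from \eqref{eq:hg-formulas}. The $n$th Taylor coefficient of $H_0$ is $c_n$ and that of $G_0$ is $c_{-n}$, so the $n$th coefficient of $h=(c_1H_0-c_{-1}G_0)/\Delta$ is $(c_1c_n-c_{-1}c_{-n})/\Delta$. The analogous computation gives the coefficient of $g$. For $n=1$ these recover $h'(0)=1$ and $g'(0)=0$, which is consistent with \eqref{eq:hg-expansion}.

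The only step needing real care is checking the stated rational values of $A,B,C,D$. I would do this with exact arithmetic from \eqref{eq:c1}, \eqref{eq:c2}, \eqref{eq:cm1}, \eqref{eq:Delta-exact} and $b=1/44$. It is convenient to first write $\Delta=(c_1-c_{-1})(c_1+c_{-1})$, using the semiaxis values in \eqref{eq:axes}, since this keeps the factorizations manageable.

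For instance, $A=c_1c_2/\Delta=c_1c_2R_xR_y$. Similarly, $D/A=-c_{-1}/c_1$ and $C/B=-c_1/c_{-1}$. These ratio identities give quick consistency checks on the printed fractions. This arithmetic is the main (and purely mechanical) obstacle; the structural identity itself is immediate.
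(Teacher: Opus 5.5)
Your proposal is correct and matches the paper's proof: substitute $c_n=c_2a^{n-2}$ and $c_{-n}=c_{-1}b\,b^{n-2}$ into \eqref{eq:HnGn-general} and read off the coefficients of $a^{n-2}$ and $b^{n-2}$ as $A,B$ and $D,C$. Your extra exact-arithmetic check of the printed fractions (conveniently done via $1/\Delta=R_xR_y$, and the fractions do come out as stated) goes beyond what the paper writes, but it is not needed for the structural identity itself.
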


\begin{proof}
By \eqref{eq:geomcoeff}, for $n\ge2$,
\[
 c_n=c_2a^{n-2},
 \quad
 c_{-n}=c_{-1}b^{n-1}=c_{-1}b\,b^{n-2}.
\]
Insert these into \eqref{eq:HnGn-general}.  The coefficients of $a^{n-2}$ and $b^{n-2}$ are exactly the quantities in \eqref{eq:A}-\eqref{eq:D}.
\end{proof}

\begin{remark}[Coefficient asymmetry]\label{rem:asymmetry}
{\rm Numerically one has $|a|\approx0.9091$ and $|b|\approx0.0227$.  Thus the $a$-tail is long and oscillatory while the $b$-tail dies almost immediately.  The constants $A$ and $D$ are of different magnitudes, and this imbalance is amplified by coefficient squaring.  Near the negative real boundary, where the factor $a^2z$ is close to $-1$, the analytic and co-analytic self-convolution derivatives can therefore compete strongly even though the original map $f$ is globally sense-preserving.}
\end{remark}
\vskip .10in
\section{Closed formulas for the self-convolution}\label{sec:convolution}

Let
\begin{equation}\label{eq:F}
 F=f\convh f=p+\overline q,
\end{equation}
where
\begin{equation}\label{eq:pq}
 p=h*h,
 \quad
 q=g*g.
\end{equation}
From \eqref{eq:selfconv-series}, we get
\begin{equation}\label{eq:pq-series}
 p(z)=z+\sum_{n=2}^\infty M_n^2z^n,
 \quad
 q(z)=\sum_{n=2}^\infty N_n^2z^n.
\end{equation}
Hence
\begin{equation}\label{eq:pq-deriv-series}
 p'(z)=1+\sum_{n=2}^\infty nM_n^2z^{n-1},
 \quad
 q'(z)=\sum_{n=2}^\infty nN_n^2z^{n-1}.
\end{equation}

For convenience, we introduce the function
\begin{equation}\label{eq:Sdef}
 S_r(z)=\frac{z(2-rz)}{(1-rz)^2}.
\end{equation}

\begin{lemma}\label{lem:S}
If $|rz|<1$, then
\begin{equation}\label{eq:Sseries}
 S_r(z)=\sum_{n=2}^\infty nr^{n-2}z^{n-1}.
\end{equation}
\end{lemma}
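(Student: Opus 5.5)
The identity \eqref{eq:Sseries} is a standard manipulation of the geometric series, and I will derive it directly.

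\textbf{Step 1.} For $|w|<1$ start from $\sum_{n=0}^\infty w^n=(1-w)^{-1}$. Differentiating term by term, which is legitimate inside the disk of convergence, gives
\[
 \sum_{n=1}^\infty n w^{n-1}=\frac{1}{(1-w)^2}.
\]
Subtract the $n=1$ term to obtain
\[
 \sum_{n=2}^\infty n w^{n-1}=\frac{1}{(1-w)^2}-1=\frac{w(2-w)}{(1-w)^2}.
\]

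\textbf{Step 2.} Substitute $w=rz$, which is allowed because $|rz|<1$. Then
\[
 \sum_{n=2}^\infty n r^{n-1}z^{n-1}=\frac{rz(2-rz)}{(1-rz)^2}=r\,S_r(z).
\]
If $r\neq0$, dividing by $r$ gives \eqref{eq:Sseries}.

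\textbf{Step 3.} The case $r=0$ must be handled separately, reading $0^0=1$. The right side of \eqref{eq:Sseries} then reduces to its $n=2$ term $2z$, and $S_0(z)=2z$, so the identity holds. Alternatively, both sides are analytic in $r$ on $|r|<1/|z|$, so the identity extends to $r=0$ by continuity.

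This case is the only point needing any care; the rest is routine. Absolute convergence for $|rz|<1$ justifies all the rearrangements above.
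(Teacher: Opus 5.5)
Your proof is correct and is essentially the paper's argument: both reduce the claim to an elementary geometric-series identity evaluated at $rz$. The only difference is that the paper writes the sum as $z\sum_{m\ge0}(m+2)(rz)^m=z\,\frac{2-rz}{(1-rz)^2}$, multiplying by $z$ instead of dividing by $r$, so your separate treatment of $r=0$ is not needed there.
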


\begin{proof}
Since
\[
 \sum_{m=0}^\infty (m+2)t^m
 =\frac{2-t}{(1-t)^2},
\]
set $t=rz$ and multiply by $z$, we get the desired conclusion.
\end{proof}


\begin{theorem}[Rational derivative formulas]\label{thm:rational-derivatives}
For $z\in\D$,
\begin{align}
 p'(z)
 &=1+A^2S_{a^2}(z)+2AB S_{ab}(z)+B^2S_{b^2}(z),\label{eq:pprime-closed}\\
 q'(z)
 &=D^2S_{a^2}(z)+2DC S_{ab}(z)+C^2S_{b^2}(z).\label{eq:qprime-closed}
\end{align}
In particular, $p'$ and $q'$ are rational functions whose poles all lie outside $\overline\D$.
\end{theorem}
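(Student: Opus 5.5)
The plan is to substitute the two-tail formulas of Proposition \ref{prop:two-tail} into the series \eqref{eq:pq-deriv-series}, expand the squares, and sum each resulting geometric-type series with Lemma \ref{lem:S}.

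For $n\ge2$, \eqref{eq:Hn-tail} gives
\[
 M_n^2=A^2(a^2)^{n-2}+2AB(ab)^{n-2}+B^2(b^2)^{n-2}.
\]
Similarly, \eqref{eq:Gn-tail} gives
\[
 N_n^2=D^2(a^2)^{n-2}+2DC(ab)^{n-2}+C^2(b^2)^{n-2}.
\]
Multiplying by $nz^{n-1}$ and summing over $n\ge2$ splits each of $p'-1$ and $q'$ into three series of the form $\sum_{n\ge2} n r^{n-2}z^{n-1}$, with $r\in\{a^2,ab,b^2\}$. Each of these series is exactly $S_r(z)$ by \eqref{eq:Sseries}, once we know $|rz|<1$.

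For $z\in\D$ that condition holds because $|a|<1$ and $|b|<1$, so $a^2$, $|ab|$ and $b^2$ are all less than $1$. Each of the three series then converges absolutely on $\D$. This justifies rearranging the sum termwise, since the finite linear combination of absolutely convergent series may be summed term by term. Doing so yields \eqref{eq:pprime-closed} and \eqref{eq:qprime-closed}.

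For the final assertion, note that $S_r$ is rational with its only pole at $z=1/r$ when $r\neq0$. Here $r\neq0$ in all three cases, since $a,b\neq0$. The poles of $p'$ and $q'$ are therefore among $1/a^2=121/100$, $1/(ab)=-22\cdot 11/5\cdot\ldots$ (of modulus $1/|ab|=48.4$), and $1/b^2=1936$, all of modulus greater than $1$, so none lies in $\overline\D$. Because the rational right-hand sides are analytic on a disk strictly larger than $\overline\D$, the identities also extend by analytic continuation beyond $\D$, though only $z\in\D$ is claimed.

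I expect no real obstacle. The one point needing care is the justification of the termwise rearrangement, and the uniform bound $\max(a^2,|ab|,b^2)<1$ settles it.
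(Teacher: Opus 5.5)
Your proposal is correct and follows essentially the same route as the paper: square the two-tail coefficients from Proposition~\ref{prop:two-tail}, sum each of the three resulting series with Lemma~\ref{lem:S} using $|a^2|,|ab|,|b^2|<1$, and read off the poles at $z=1/r$, all of modulus exceeding $1$. The only blemish is cosmetic: your garbled value for the middle pole should simply read $1/(ab)=-242/5$.
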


\begin{proof}
From \eqref{eq:Hn-tail}, we have
\[
M_n^2=A^2a^{2n-4}+2AB(ab)^{n-2}+B^2b^{2n-4}.
\]
Substituting into the first series in \eqref{eq:pq-deriv-series} and applying Lemma \ref{lem:S} term by term yields \eqref{eq:pprime-closed}.  The same argument with \eqref{eq:Gn-tail} gives \eqref{eq:qprime-closed}.

The possible poles of $S_r$ occur where $rz=1$. Since $|a^2|<1,\,|ab|<1,\,|b^2|<1$, their reciprocals all have modulus greater than one.
\end{proof}

The Jacobian of $F$ is 
\begin{equation}\label{eq:JF}
 \Jac_F(z)=|p'(z)|^2-|q'(z)|^2.
\end{equation}
On the real interval $(-1,1)$, all quantities in \eqref{eq:pprime-closed}-\eqref{eq:qprime-closed} are real, so
\begin{equation}\label{eq:JF-real}
 \Jac_F(x)=p'(x)^2-q'(x)^2
 \quad (-1<x<1).
\end{equation}
This reduces the decisive test to a comparison of two rational numbers.
\vskip .10in
\section{Exact sign change of the Jacobian}\label{sec:sign}

We now evaluate at
\begin{equation}\label{eq:z0}
 z_0=-\frac{99}{100}.
\end{equation}
The choice is close enough to $-1$ to amplify the slowly decaying $a$-tail, while remaining a simple rational point.

\begin{proposition}[Exact derivative values]\label{prop:derivative-values}
At $z_0=-99/100$,
\begin{equation}\label{eq:pprime-value}
 p'(z_0)
 \approx 0.009581>0,
\end{equation}
and
\begin{equation}\label{eq:qprime-value}
 q'(z_0)
 \approx -0.021071<0.
\end{equation}
In particular,
\begin{equation}\label{eq:q-bigger}
 |p'(z_0)|<|q'(z_0)|.
\end{equation}
\end{proposition}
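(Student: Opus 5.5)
The plan is to evaluate the closed forms of Theorem~\ref{thm:rational-derivatives} at $z_0=-99/100$ in exact rational arithmetic, and then bound the result rigorously enough to certify the signs and the inequality \eqref{eq:q-bigger}. Every ingredient is an explicit rational number. The constants $A,B,C,D$ are given in \eqref{eq:A}--\eqref{eq:D}, and the parameters $a^2=100/121$, $ab=-5/242$, $b^2=1/1936$ are rational. Hence $p'(z_0)$ and $q'(z_0)$ are rational numbers that can be written down exactly, and no series truncation is needed because Lemma~\ref{lem:S} has already summed the tails.

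\textbf{Step 1.} Compute the three kernel values $S_{a^2}(z_0)$, $S_{ab}(z_0)$ and $S_{b^2}(z_0)$ from \eqref{eq:Sdef}. The dominant one is
\[
 S_{a^2}(z_0)=\frac{z_0(2-a^2z_0)}{(1-a^2z_0)^2},
\qquad
 a^2z_0=-\frac{9}{11},
\]
so $1-a^2z_0=20/11$ and $2-a^2z_0=31/11$. This gives $S_{a^2}(z_0)=-\tfrac{99}{100}\cdot\tfrac{31/11}{400/121}$, a simple negative rational number of moderate size. The other two kernels have $|rz_0|$ tiny, so $S_{ab}(z_0)\approx z_0$ and $S_{b^2}(z_0)\approx z_0$ up to small explicit corrections. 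I will compute them exactly anyway.

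\textbf{Step 2.} Assemble
\[
 p'(z_0)=1+A^2S_{a^2}(z_0)+2AB\,S_{ab}(z_0)+B^2S_{b^2}(z_0),
\]
and likewise $q'(z_0)$ with $D^2$, $2DC$ and $C^2$. The crucial structure is that $A^2S_{a^2}(z_0)$ nearly cancels the leading $1$. Indeed $A^2\approx1.171$ and $S_{a^2}(z_0)\approx-0.846$, so their product is about $-0.99$. This near-cancellation is exactly why $p'(z_0)$ is small and positive. The sign of $q'(z_0)$ is dominated by $D^2S_{a^2}(z_0)<0$ together with $C^2S_{b^2}(z_0)<0$, and the cross term $2DC\,S_{ab}(z_0)<0$ as well, since $D,C>0$ and $S_{ab}(z_0)<0$. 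So $q'(z_0)<0$ follows purely from signs, with no arithmetic.

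\textbf{Step 3.} For $p'(z_0)$ the cross terms are negligible: $2AB$ is about $10^{-3}$ in size and $B^2$ is about $10^{-7}$. The positivity must therefore come from an exact reduction of
\[
 1+A^2S_{a^2}(z_0)+2AB\,S_{ab}(z_0)+B^2S_{b^2}(z_0)
\]
to a single fraction, or at least from interval bounds tighter than the margin $\approx0.0096$. Since $p'(z_0)>0$ and $q'(z_0)<0$, inequality \eqref{eq:q-bigger} is equivalent to $p'(z_0)+q'(z_0)<0$. I would verify this as a single exact rational inequality as well, with margin about $0.0115$.

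The main obstacle is Step 3. The quantity $p'(z_0)$ results from a cancellation to about one percent between terms of size one, and the constants involve eleven-digit numerators and denominators. So the certification must be done in exact rational arithmetic, or with rigorous decimal enclosures carried to roughly six significant digits, rather than by rough estimation. I would first reduce $1+A^2S_{a^2}(z_0)$ exactly, since it carries essentially all of the value. I would then bound the remaining two terms crudely, using $|2AB\,S_{ab}(z_0)|+|B^2S_{b^2}(z_0)|<10^{-3}$, which stays well inside the margin.
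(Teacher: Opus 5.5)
Your route is the paper's: substitute the rational data into \eqref{eq:pprime-closed}--\eqref{eq:qprime-closed} at $z_0=-99/100$ and decide everything by exact rational arithmetic. Two of your additions are correct and make the certification clearer than the paper's bare ``reduction to a common denominator''. The first is the sign-only argument for $q'(z_0)<0$, using $C,D>0$ and $S_r(z_0)<0$ for each $r$. The second is the reduction of \eqref{eq:q-bigger} to $p'(z_0)+q'(z_0)<0$.

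The crude bound in Step~3, however, is false as written. By Lemma~\ref{lem:S}, $S_r(z)=2z+3rz^2+\cdots$, so for small $rz$ one has $S_r(z)\approx 2z$, not $z$. Exactly, with $abz_0=9/440$,
\[
 S_{ab}(z_0)=-\frac{99}{100}\cdot\frac{871\cdot 440}{431^2}\approx-2.0425,
 \qquad
 S_{b^2}(z_0)\approx-1.9785 .
\]
Hence $2AB\,S_{ab}(z_0)\approx-1.977\times10^{-3}$, and $|2AB\,S_{ab}(z_0)|+|B^2S_{b^2}(z_0)|$ is about twice your claimed bound $10^{-3}$.

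The sign conclusion still survives. Since $S_{a^2}(z_0)=-33759/40000\approx-0.8440$ (not $-0.846$), you get $1+A^2S_{a^2}(z_0)\approx0.01156$. This comfortably exceeds a corrected bound such as $2.1\times10^{-3}$.

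The cross term is not negligible for the \emph{value}, though. It is what lowers $0.01156$ to the stated $0.009581$ in \eqref{eq:pprime-value}, so you must keep it if you want the approximation and not only the sign.

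For \eqref{eq:q-bigger} you can also avoid a second full reduction. Note $AB=DC$, so the cross terms combine to $4AB>0$, while all $S_r(z_0)<0$. Therefore $p'(z_0)+q'(z_0)<1+(A^2+D^2)S_{a^2}(z_0)\approx-0.0075$. The single exact check $A^2+D^2>40000/33759$ then suffices.
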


\begin{proof}
Substitute \eqref{eq:ab}, \eqref{eq:A}-\eqref{eq:D}, and \eqref{eq:z0} into \eqref{eq:pprime-closed}-\eqref{eq:qprime-closed}.  Since every parameter is rational, the result is exact rational arithmetic.  Reduction to a common denominator gives \eqref{eq:pprime-value} and \eqref{eq:qprime-value}.  The denominator is positive.  The numerator in \eqref{eq:qprime-value} has larger absolute value than that in \eqref{eq:pprime-value}, proving \eqref{eq:q-bigger}. 
\end{proof}

\begin{corollary}[Negative Jacobian]\label{cor:negative-J}
At $z_0=-99/100$,
\begin{equation}\label{eq:J-exact}
 \Jac_F(z_0)
 \approx {-0.000352}<0.
\end{equation}
\end{corollary}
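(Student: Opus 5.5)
The plan is to read off the sign of $\Jac_F(z_0)$ from the real-axis reduction \eqref{eq:JF-real} and the exact values in Proposition \ref{prop:derivative-values}. Since $z_0=-99/100\in(-1,1)$ and every parameter $a,b,A,B,C,D$ is real, $p'(z_0)$ and $q'(z_0)$ are real. Hence
\[
 \Jac_F(z_0)=p'(z_0)^2-q'(z_0)^2=\bigl(p'(z_0)-q'(z_0)\bigr)\bigl(p'(z_0)+q'(z_0)\bigr).
\]
I will use this factorized form because it makes the sign transparent. By Proposition \ref{prop:derivative-values}, $p'(z_0)>0>q'(z_0)$, so the first factor is positive. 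By \eqref{eq:q-bigger}, $p'(z_0)<|q'(z_0)|=-q'(z_0)$, so the second factor is negative. Therefore $\Jac_F(z_0)<0$.

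For the numerical value, insert $p'(z_0)\approx0.009581$ and $q'(z_0)\approx-0.021071$. This gives $(0.030652)(-0.011490)\approx-0.000352$, matching \eqref{eq:J-exact}.

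To make the argument fully rigorous rather than decimal, I would keep everything in exact rational form. I would evaluate each $S_r(z_0)$ from \eqref{eq:Sdef} with $r\in\{a^2,ab,b^2\}$ as an explicit rational number. Note that each $1-rz_0>0$, since $r>0$ for $a^2,b^2$ and $|ab|z_0$ is tiny. I would then combine these in \eqref{eq:pprime-closed}--\eqref{eq:qprime-closed} with the rational constants \eqref{eq:A}--\eqref{eq:D}. The result expresses $p'(z_0)\pm q'(z_0)$ as quotients with a common positive denominator, and the sign claim reduces to comparing two integer numerators.

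The main obstacle is this exact arithmetic. The numerators and denominators become very large, because of the denominators of $A,\dots,D$ and the powers of $100$ and $1936$. I would carry it out with computer algebra. As a safeguard, I would also bound the error of the decimal approximations: since $p'(z_0)+q'(z_0)\approx-0.0115$ has a margin several orders of magnitude larger than any rounding error, interval arithmetic with six significant digits already certifies the sign.
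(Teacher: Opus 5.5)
Your proposal is correct and is essentially the paper's argument. Both restrict to the real axis via \eqref{eq:JF-real} and read the sign off from $|p'(z_0)|<|q'(z_0)|$ in Proposition~\ref{prop:derivative-values}; your factorization $(p'-q')(p'+q')$ and the exact/interval-arithmetic safeguard just make explicit the ``direct reduction'' the paper invokes, and the decimals $p'(z_0)\approx0.009581$, $q'(z_0)\approx-0.021071$ do check out against the closed forms \eqref{eq:pprime-closed}--\eqref{eq:qprime-closed}.
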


\begin{proof}
By \eqref{eq:JF-real}, we obtain
\[
 \Jac_F(z_0)=p'(z_0)^2-q'(z_0)^2.
\]
The sign already follows from \eqref{eq:q-bigger}; direct reduction gives the rational number \eqref{eq:J-exact}.
\end{proof}

At the origin, the situation is the opposite.

\begin{lemma}\label{lem:J0}
One has
\(
 \Jac_F(0)=1.\)
\end{lemma}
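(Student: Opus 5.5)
We evaluate the two derivatives of $F=p+\overline q$ at the origin, using either the power series \eqref{eq:pq-deriv-series} or the closed forms of Theorem \ref{thm:rational-derivatives}.

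From \eqref{eq:pq-deriv-series}, every term of $p'(z)-1=\sum_{n\ge2} nM_n^2z^{n-1}$ carries a factor $z^{n-1}$ with $n-1\ge1$. Hence $p'(0)=1$. Similarly, each term of $q'(z)=\sum_{n\ge2} nN_n^2z^{n-1}$ vanishes at $z=0$, so $q'(0)=0$. The closed forms give the same values, because $S_r(0)=0$ for every $r$ by \eqref{eq:Sdef}.

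This reflects the fact that $F$ inherits the normalization of $f$. Since $f\in\KHZ$ by Proposition \ref{prop:normalization}, the series \eqref{eq:selfconv-series} shows that the linear coefficient of $p$ is $1\cdot1=1$. The co-analytic part $q$ has no linear term, because $g'(0)=0$ forces $N_1=0$ and hence $N_1^2=0$.

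Substituting into \eqref{eq:JF} gives
\[
\Jac_F(0)=|p'(0)|^2-|q'(0)|^2=1-0=1.
\]

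The only point needing care is the justification that the series may be evaluated termwise at $z=0$. This is immediate because the series converge locally uniformly in $\D$: their coefficients have geometric decay with ratios $a^2$, $ab$, $b^2$, all of modulus less than one. There is no real obstacle; the lemma is a direct consequence of the normalization $h'(0)=1$, $g'(0)=0$ being preserved under self-convolution.
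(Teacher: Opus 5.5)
Your proposal is correct and matches the paper's proof: the normalization $f\in\SHZ$ gives $M_1=1$ and $N_1=0$. Reading off the derivative series of $p$ and $q$ at the origin then gives $p'(0)=1$ and $q'(0)=0$, hence $\Jac_F(0)=1$. The remarks about the closed forms and locally uniform convergence are consistent but not needed, since any power series can be evaluated termwise at its own center.
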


\begin{proof}
Because $f\in\SHZ$, $M_1=1$ and $N_1=0$.  Therefore, $p'(0)=1$ and $q'(0)=0$ from \eqref{eq:pq-deriv-series}.
\end{proof}

Finally, we present the proof of Theorem \ref{thm:main}.

\begin{proof}[Proof of Theorem \ref{thm:main}]
By Propositions \ref{prop:normalization} and \ref{prop:ellipse}, the function $f$ defined in \eqref{eq:f-def} belongs to $\KHZ$ and maps $\D$ onto a bounded ellipse.  Let $F=f\convh f$.  By Corollary \ref{cor:negative-J} and Lemma \ref{lem:J0}, we 
have
\[\Jac_F\!\left(-\frac{99}{100}\right)<0,
 \quad
  \Jac_F(0)>0.
\]
The Jacobian is continuous, so there exists
\[
 \xi\in\left(-\frac{99}{100},0\right)
\]
with \[\Jac_F(\xi)=0.\]  By Theorem \ref{thm:Lewy}, $F$ cannot be locally one-to-one at $\xi$.  Hence $F$ is not locally univalent and in particular $F\notin\KH^0$.  
\end{proof}

\vskip .10in
\section*{Concluding remarks}

We construct an explicit normalized convex harmonic mapping $f$ whose image is a bounded ellipse, yet whose harmonic self-convolution fails to be locally univalent. Our construction employs Blaschke factors, the Poisson extension, the Radó-Kneser-Choquet theorem, and real-affine normalization. At \(z=-99/100\), the Jacobian of \(f\convh f\) is a strictly negative rational number, while at the origin it equals 1. Lewy’s theorem then implies that the self-convolution has a critical point. This gives a negative answer to Dorff’s bounded-domain self-convolution problem and demonstrates that smooth strict convexity of the image alone is insufficient to recover the analytic Pólya-Schoenberg closure phenomenon. 
Motivated by this observation, we formulate the following conjecture.
\vskip .05in
\noindent\textbf{Conjecture.}
{\it Let $f:\mathbb{D}\to\mathbb{C}$ be a normalized convex harmonic mapping with convex image. If the dilatation $\omega_f$ satisfies a suitable quantitative bound, then the harmonic self-convolution \(f\convh f\) is locally univalent.}

\vskip .10in
\noindent{\bf  Acknowledgements.} 
Z.-G. Wang was partially supported by the \textit{Key Project of Education Department of Hunan Province} under Grant no. 25A0668, and
the \textit{Natural Science Foundation of Changsha} under Grant no. kq2502003
of the P. R. China. D. Zhong was partially supported by the \textit{Guangdong Basic and Applied Basic Research Foundation} under Grant nos. 2022A1515110967 and 2023A1515011809 of the P. R. China. Some English phrasing of the paper was polished with the help of an AI language model. All mathematical content and reasoning are solely the work of the authors.

\vskip .05in
\noindent{\bf
Contribution statement.}
All authors contributed equally to this work.

\vskip .05in
\noindent{\bf Conflicts of interest.} The authors declare that they have no conflict of interest.

\vskip .05in
\noindent{\bf Data availability statement.}  Data sharing is not applicable to this article as no datasets were generated or analysed during the current study.

\end{document}